\newcommand{\R}{\mathbb{R}}
\newcommand{\N}{\mathbb{N}}
\newcommand{\sg}[1]{\textnormal{sign}\left(#1\right)}
\newcommand{\cut}{\textnormal{cut}}
\newtheorem{lemma}{Lemma}
\newtheorem{ques}[lemma]{Question}
\newtheorem{conj}[lemma]{Conjecture}
\newtheorem{theorem}{Theorem}
\newtheorem{definition}{Definition}
\numberwithin{lemma}{section}
\numberwithin{fact}{section}
\numberwithin{equation}{section}
\title{The $p$-spectral radius of the Laplacian}
\author[1]{Elizandro Max Borba\thanks{elizandro.max@ufrgs.br}}
\author[2]{Sebastian Richter\thanks{sebastian.richter@mathematik.tu-chemnitz.de}}
\author[3]{Eliseu Fritscher\thanks{eliseu.fritscher@gmail.com}}
\author[4]{Carlos Hoppen\thanks{choppen@ufrgs.br}}
\affil[1,3,4]{Instituto de Matem\'atica, Universidade Federal do Rio Grande do Sul, Brazil}
\affil[1]{\'Area de Ci\^encias Exatas, Universidade Estadual do Rio Grande do Sul, Brazil}
\affil[2]{Fakult\"at f\"ur Mathematik, Technische Universit\"at Chemnitz, Germany}
\begin{document} \maketitle

\begin{abstract} The $p$-spectral radius of a graph $G=(V,E)$ with adjacency matrix $A$ is defined as $\lambda^{(p)}(G)=\max \{x^TAx : \|x\|_p=1 \}$. This parameter shows remarkable connections with graph invariants, and has been used to generalize some extremal problems. In this work, we extend this approach to the Laplacian matrix $L$, and define the $p$-spectral radius of the Laplacian as $\mu^{(p)}(G)=\max \{x^TLx : \|x\|_p=1 \}$. We show that $\mu^{(p)}(G)$ relates to invariants such as maximum degree and size of a maximum cut. We also show properties of $\mu^{(p)}(G)$ as a function of $p$, and a upper bound on $\max_{G \colon |V(G)|=n} \mu^{(p)}(G)$ in terms of $n=|V|$ for $p\ge 2$, which is attained if $n$ is even.

\noindent \textbf{Keywords:} Laplacian Matrix, p-spectral radius


\end{abstract}

\section{Introduction and main results}

Let $G=(V,E)$ be a simple $n$-vertex graph at least one edge with adjacency matrix $A$ and Laplacian matrix $L$. We recall that $L=D-A$, where $D$ is the diagonal matrix of vertex degrees. 

It is well known that obtaining the least and the largest eigenvalues ($\lambda_1$ and $\lambda_n$, respectively) of a real symmetric matrix $M\in\R^{n\times n}$ can be viewed as an optimization problem using the Rayleigh-Ritz Theorem \cite[Theorem 4.2.2]{HornJo}:
$$\lambda_1(M)=\min_{\|x\|=1}x^TMx\le\frac{x^TMx}{x^Tx}\le\max_{\|x\|=1}x^TMx=\lambda_n,$$
where $x\in\R^n$. Using the fact that $x^TAx=2\sum_{ij\in E} x_ix_j$, Keevash, Lenz and Mubayi \cite{KLM14} replaced the Euclidean norm $\|x\|$ by the $p$-norm $\|x\|_p$, where $p \in [1,\infty]$, and defined the \emph{$p$-spectral radius} $\lambda^{(p)}(G)$:

$$\lambda_p(G)=\max_{\|x\|_p=1}2\sum_{ij\in E} x_ix_j.$$


This parameter shows remarkable connections with some graph invariants. For instance, $\lambda^{(1)}(G)$ is equal to the Lagrangian $\mathfrak{L}_G$ of $G$, which was defined by Motzkin and Straus \cite{MotStr65} and satisfies $2\mathfrak{L}_G-1=1/\omega(G)$, where $\omega(G)$ is the clique number of $G$. Obviously $\lambda^{(2)}(G)$ is the usual spectral radius, and it can be shown that $\lambda^{(\infty)}(G)/2$ is equal to the number of edges of $G$.

An interesting result involving this parameter is about \emph{$K_r$-free graphs}, that is, graphs that do not contain a complete graph with $r$ vertices as a subgraph. Tur\'an \cite{Tur41} proved that, for all positive integers $n$ and $r$, the balanced complete $r$-partite graph, known as a \emph{Tur\'an graph} $T_r(n)$, is the only graph with maximum number of edges among all $K_{r+1}$-free graphs of order $n$. Kang and Nikiforov  \cite{KangNiki14} proved that, for $p\ge 1$, the graph $T_r(n)$ is also the only graph that maximizes $\lambda^{(p)}(G)$ over $K_{r+1}$-free graphs of order $n$, thus generalizing Tur\'an's result (which is the case $p=\infty$). Other results were obtained and extended to hypergraphs \cite{Niki14}.

This motivates us to extend this approach to the Laplacian matrix $L$, replacing the Euclidean norm by the $p$-norm. As $x^TLx=\sum_{ij\in E} (x_i-x_j)^2$, we define the $p$-spectral radius of the Laplacian as follows:

\begin{definition}
Let $G=(V,E)$. The $p$-spectral radius of the Laplacian matrix of $G$ is given by
$$\mu^{(p)}(G)=\max_{\|x\|_p=1} \sum_{ij\in E} (x_i-x_j)^{2}.$$ 
\end{definition}

According to Mohar \cite{mohar1991}, the Laplacian matrix is considered to be more natural than the adjancency matrix. It is a discrete analog of the Laplace operator, which is present in many important differential equations. The Kirchhoff Matrix-Tree theorem is a early example of the use of $L$ in Graph Theory. The largest eigenvalue (spectral radius) of $L$ has been associated, for example, with degree sequences of a graph \cite{GutVidSte2002,LSC09,AndMor1985,Pan2002}. The second smallest eigenvalue and its associated eigenvectors have also been studied since the seminal work by Fiedler \cite{Fiedler73}, which has been used in graph partitioning and has led to an extensive literature in spectral clustering. For more information about this area, see the survey~\cite{vonLux} and the references therein.

Therefore we hope that the definition of $\mu^{(p)}$ will shed some light on classical parameters of graph theory. In fact, we show that, in the same fashion as $\lambda^{(p)}(G)$, the parameter $\mu^{(p)}(G)$ relates to graph invariants, such as the maximum degree and the size of a maximum cut. We also show some properties of $\mu^{(p)}(G)$ as a function of $p$. The main results are:
\begin{theorem}\label{t:main}
Let $G=(V,E)$ be a graph with at least one edge. Then

\begin{enumerate}[(a)]
  \item $\mu^{(1)}(G)$ is equal to the maximum degree of $G$;
  \item $\mu^{(\infty)}(G)/4$ is equal to the size of a maximum cut  of $G$.
  \item The function $f_G : [1, \infty) \to \R$ defined by $f_G(p)=\mu^{(p)}(G)$ is strictly increasing, continuous and converges when $p\to\infty$;
\end{enumerate}
\end{theorem}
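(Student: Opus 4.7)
For parts (a) and (b), my plan is to exploit the convexity of the quadratic form $F(x) := x^T L x = \sum_{ij \in E}(x_i - x_j)^2$, which is non-negative because $L$ is positive semi-definite. Since $F$ is $2$-homogeneous and non-negative, its maximum over the ball $\{\|x\|_p \le 1\}$ agrees with its maximum over the sphere $\{\|x\|_p = 1\}$, and since $F$ is convex this maximum is attained at an extreme point of the ball. The unit ball in $\|\cdot\|_1$ has extreme points $\pm e_v$ with $v \in V$, on which $F(\pm e_v) = \deg(v)$, so the maximum is $\Delta(G)$, proving (a). The unit ball in $\|\cdot\|_\infty$ is the cube $[-1,+1]^n$, whose extreme points $x \in \{-1,+1\}^n$ encode a bipartition $S = \{v : x_v = 1\}$ with $F(x) = 4\,|E(S, V \setminus S)|$, whose maximum over $S$ is four times the size of a maximum cut, proving (b).

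For strict monotonicity in (c), the plan is to use the norm inequality $\|x\|_q \le \|x\|_p$ for $1 \le p \le q$, which is strict unless $x$ has support of size at most one. The key auxiliary fact I will establish is that $\mu^{(p)}(G) > \Delta(G)$ for every $p > 1$: for a maximum-degree vertex $v$ with a neighbour $u$, and for $a, b > 0$ with $a^p + b^p = 1$, the vector $x$ with $x_v = a,\ x_u = -b,\ x_w = 0$ otherwise satisfies $\|x\|_p = 1$ and $F(x) = \deg(v)\,a^2 + \deg(u)\,b^2 + 2ab$; using $a = 1 - b^p/p + O(b^{2p})$, a Taylor expansion in $b$ gives $F(x) = \Delta(G) + 2b + O(b^2 + b^p)$, which strictly exceeds $\Delta(G)$ for small $b > 0$ when $p > 1$. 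This immediately settles $p = 1 < q$, since then $\mu^{(q)}(G) > \Delta(G) = \mu^{(1)}(G)$ by part (a). For $1 < p < q$, an optimizer $x^*$ for $\mu^{(p)}(G)$ must have support of size at least two (otherwise $F(x^*) \le \Delta(G) < \mu^{(p)}(G)$), so $\|x^*\|_q < 1$, and $y := x^*/\|x^*\|_q$ gives $\mu^{(q)}(G) \ge F(y) = F(x^*)/\|x^*\|_q^2 > \mu^{(p)}(G)$.

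For continuity and convergence I plan to use a semi-continuity argument based on the joint continuity of $(p, x) \mapsto \|x\|_p$. Given $p_n \to p \in [1, \infty)$ and optimizers $x_n$ realizing $\mu^{(p_n)}(G)$, the bound $\|x_n\|_\infty \le \|x_n\|_{p_n} = 1$ places the $x_n$ in $[-1, 1]^n$; any subsequential limit $x^*$ satisfies $\|x^*\|_p = 1$ and $F(x^*) \le \mu^{(p)}(G)$, yielding upper semi-continuity. For lower semi-continuity, normalize a fixed optimizer $x^*$ for $p$ to $y_n := x^*/\|x^*\|_{p_n}$ and observe that $\mu^{(p_n)}(G) \ge F(y_n) \to F(x^*) = \mu^{(p)}(G)$ as $\|x^*\|_{p_n} \to \|x^*\|_p = 1$. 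Finally, convergence as $p \to \infty$ follows from monotonicity and the uniform upper bound $\mu^{(p)}(G) \le 4|E|$ (since $\|x\|_\infty \le 1$ gives $(x_i - x_j)^2 \le 4$ on each edge). The main obstacle I anticipate is the corner case $p = 1$ in strict monotonicity, where the $\ell^p$-norm inequality is tight on single-support optimizers and the generic normalization argument breaks down, forcing the perturbative construction above.
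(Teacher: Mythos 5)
Your proposal is correct, and it takes a genuinely different route from the paper's in several places. For parts (a) and (b), the paper does not invoke convexity at all: it proves structural lemmas about optimizers by local perturbation (for $p=1$, merging two same-sign entries shows an optimizer has $|P|,|N|\le 1$, then a direct computation shows $e_a$ with $a$ of maximum degree wins; for $p=\infty$, pushing any entry with $|x_a|<1$ to $\pm 1$ shows optimizers lie in $\{-1,1\}^n$). Your observation that $F(x)=x^TLx$ is convex (since $L$ is positive semi-definite) and that a convex function on a compact convex set attains its maximum at an extreme point subsumes both perturbation lemmas in one stroke and is cleaner; the paper's hands-on version does yield a little extra (e.g.\ the equality characterization $d_a=d_b=d_{ab}$, and the observation that for $\Delta(G)\ge 2$ the $\ell^1$-optimizers are exactly the $\pm e_a$). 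For strict monotonicity, both arguments rescale a $p$-optimizer by its $q$-norm, and both must handle the case where the rescaling is not strict; the paper does this by writing down the Lagrange stationarity conditions and checking that $e_i$ never satisfies them for $p>1$, whereas you exhibit an explicit two-support test vector beating $\Delta(G)$, which treats the delicate endpoint $p=1$ more transparently and avoids the differentiability discussion. For continuity, the paper uses the quantitative norm comparison $\|x\|_p\le n^{1/p-1/q}\|x\|_q$ to produce an explicit modulus of continuity, while your compactness/semicontinuity argument is softer but equally valid. One small remark: for the limit as $p\to\infty$ you only show the limit exists (monotone and bounded by $4|E|$), which does prove the statement as literally worded, but the paper's Lemma on the limit additionally identifies it as $\mu^{(\infty)}(G)$, which is the substantively interesting fact; your semicontinuity machinery would deliver this too with the normalization $x\mapsto x/\max_i|x_i|$, so it is worth adding the one extra line.
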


It seems to be the case that, by varying $p$, the vector $x$ that achieves $\mu^{(p)}(G)$ defines a maximum cut of the graph under different restrictions. For instance, $\mu^{(1)}(G)$ 
leads to a maximum cut with the constraint that one of the classes is a singleton, while $\mu^{(\infty)}(G)$ is gives a maximum cut with no additional constraint.
A rigorous basis for this statement remains a question for further investigation.

From the computational complexity point of view, it is interesting to note that computing $\mu^{(1)}(G)$ is easy (can be done in linear time), while computing $\mu^{(\infty)}(G)$ is an NP-complete problem, it is equivalent to finding the size of a maximum cut of $G$. For $\lambda^{(p)}$, the opposite happens: finding $\lambda^{(1)}(G)$ is NP-complete (equivalent to finding the clique number of $G$), while $\lambda^{(\infty)}(G)$ can be found in linear time.

We also present an upper bound on $\mu^{(p)}(G)$ if $p\ge 2$, which is attained for even $n$.
\begin{theorem}\label{t:mu_subg_bip}
Let $G=(V,E)$ be a graph with $n=|V|$. Then for $p\ge 2$,
\[ \mu^{(p)}(G)\le n^{2-2/p}. \]
If $n$ is even, equality holds if and only if  $G$ contains $K_{n/2,n/2}$ as subgraph.
\end{theorem}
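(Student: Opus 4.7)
The plan is to establish the upper bound by relaxing $G$ to $K_n$ and then comparing $\|x\|_2$ with $\|x\|_p$. For any $x \in \R^n$ with $\|x\|_p = 1$,
$$\sum_{ij \in E}(x_i - x_j)^2 \le \sum_{ij \in E(K_n)}(x_i - x_j)^2 = n\|x\|_2^2 - \Bigl(\sum_{i \in V} x_i\Bigr)^2 \le n\|x\|_2^2,$$
where the middle identity comes from expanding $\sum_{i,j \in V}(x_i - x_j)^2 = 2n\|x\|_2^2 - 2(\sum_i x_i)^2$. For $p \ge 2$, the power-mean inequality gives $\|x\|_2 \le n^{1/2 - 1/p}\|x\|_p$, so $\|x\|_2^2 \le n^{1-2/p}$ and the bound $\mu^{(p)}(G) \le n^{2-2/p}$ follows.

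For the easy direction of the equality statement, when $n$ is even and $K_{n/2,n/2} \subseteq G$, I would set $x_i = n^{-1/p}$ on one part of such a subgraph and $x_i = -n^{-1/p}$ on the other. Then $\|x\|_p^p = n \cdot n^{-1} = 1$, each of the $(n/2)^2$ crossing edges contributes $4n^{-2/p}$, and the total is exactly $n^{2-2/p}$.

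For the converse I would trace equality back through the displayed chain. Let $x^*$ be a maximizer with $\|x^*\|_p = 1$. The power-mean step is strict for $p > 2$ unless all $|x^*_i|$ coincide, so equality there forces $|x^*_i| = n^{-1/p}$ for every $i$. Next, $\sum_i x^*_i = 0$ forces the sign classes $A = \{i : x^*_i > 0\}$ and $B = \{i : x^*_i < 0\}$ to have the same cardinality $n/2$, so $n$ is even. Finally, equality between the sums over $E$ and $E(K_n)$ requires $(x^*_i - x^*_j)^2 = 0$ on every non-edge, hence every cross pair between $A$ and $B$ is an edge; that is, $G \supseteq K_{n/2,n/2}$.

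The main obstacle is the boundary case $p = 2$: there the power-mean step becomes an identity, so equality tracing no longer pins down $|x^*_i|$. In fact $\mu^{(2)}(G) = n$ is attained by graphs without a $K_{n/2,n/2}$ subgraph, for instance $K_{1,n-1}$ (whose Laplacian has largest eigenvalue $n$), so the ``only if'' direction fails at $p = 2$ and would need either a separate argument or a tightening of the hypothesis to $p > 2$ in the statement.
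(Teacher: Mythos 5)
Your proof is correct for $p>2$ and takes a noticeably more self-contained route than the paper's. For the upper bound the paper invokes the Rayleigh--Ritz theorem together with $\mu^{(2)}(K_n)=n$ (Lemma \ref{mu2p_lbound}); you instead expand $F_{K_n}(x)=n\|x\|_2^2-\left(\sum_i x_i\right)^2$ directly, which is equivalent but makes every equality condition visible term by term. The larger divergence is in the converse. The paper first argues $|x_i|=|x_j|$ for all $i,j$, then that $K_{|P|,|N|}\subseteq G$, and finally appeals to the explicit formula for $\mu^{(p)}$ of a complete bipartite graph (Lemma \ref{p:mu_bip_comp}, which itself rests on Lemmas \ref{mu1_same_sign} and \ref{mu2_bip_same_value}) to conclude $|P|=|N|=n/2$. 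You get $|A|=|B|=n/2$ for free from $\sum_i x_i=0$ combined with $|x_i|\equiv n^{-1/p}$, bypassing the three bipartite lemmas entirely; as a bonus your argument shows directly that equality forces $n$ to be even. That is a genuine simplification.

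Your closing remark about $p=2$ is not a defect of your write-up but of the statement itself, and the paper's proof shares the gap: its step ``$|x_i|=|x_j|$ for all $i,j$, otherwise $F_G(x)<n^{2-2/p}$'' relies on strictness in the upper bound of Lemma \ref{pnorm_bound}, which is vacuous at $p=2$ because $n^{1-2/p}=1=\|x\|_2^2$ for every feasible $x$. Your counterexample $K_{1,n-1}$ (largest Laplacian eigenvalue $n$, hence $\mu^{(2)}=n=n^{2-2/2}$, yet no $K_{n/2,n/2}$ subgraph for even $n\ge 4$) shows the ``only if'' direction genuinely fails at $p=2$; indeed any graph whose complement is disconnected attains $\mu^{(2)}(G)=n$. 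The equality clause of the theorem should be restricted to $p>2$, or a separate characterization given at $p=2$.
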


Note that this means that, for even $n$, the value of $\mu^{(p)}(K_n)$ is the same as the value for the balanced complete bipartite graph with $n$ vertices. We conjecture that this holds for all $n$. 

This paper is organized as follows. In the remainder of the section we introduce some notation. In sections \ref{s:t1} and \ref{s:t2} we prove Theorems \ref{t:main} and \ref{t:mu_subg_bip}, respectively. In section \ref{s:conc} we present some additional remarks, conjectures and questions for future research.

Before proving our results, we set the notation used throughout the paper. The objective function of the optimization problems is 
\[F_G(x)=x^TLx=\sum_{ij\in E(G)} (x_i-x_j)^2.\]
We may drop the subscript of $F_G$ if $G$ is clear from context. It can be readily seen that $F_{G'}(x)\le F_G(x)$ for a subgraph $G'$ of $G$, and so $F_G(x)\le F_{K_n}(x)$ for any $n$-vertex graph $G$. Furthermore, $F_G(x)=0$ if $x$ is constant in each connected component of $G$. 

Finally, given an $n$-vertex graph $G=(V,E)$ and a vector $x \in \mathbb{R}^n$, the vertex sets $P,N$ and $Z$ are those on which $x_i$ is positive, negative, or equal to zero, respectively. We write $d_i$ for the degree of vertex $i$, and $d_{ij}$ is the number of edges between vertices $i$ and $j$ (0 or 1). The all-ones vector in $\mathbb{R}^n$ is $e$ and the $i$-th vector of the canonical basis of $\mathbb{R}^n$ is $e_i$.

\section{Proof of Theorem \ref{t:main}}\label{s:t1}

In this section, we prove Theorem \ref{t:main}, which relates $\mu^{(p)}(G)$ relates to graph invariants and gives properties of $\mu^{(p)}(G)$ as a function of $p$. Item (a) states that $\mu^{(1)}(G)$ is equal to the maximum degree of $G$. In order to prove it, 
 we need two lemmas.
\begin{lemma}\label{mu1a}
Let $x\in\R^n$ such that $\|x\|_1=1$ and $F_G(x)=\mu^{(1)}(G)$. Then at most one entry of $x$ or $-x$ is positive.
\end{lemma}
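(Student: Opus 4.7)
The plan is to prove the slightly stronger statement that any optimizer $x$ has $|P|\le 1$; the lemma then follows by applying the same reasoning to $-x$, which is also an optimizer because $F_G(-x)=F_G(x)$ and $\|-x\|_1=\|x\|_1$. So I assume toward a contradiction that $\|x\|_1=1$, $F_G(x)=\mu^{(1)}(G)$, and $|P|\ge 2$, with the aim of producing $y\in\R^n$ satisfying $\|y\|_1=1$ and $F_G(y)>F_G(x)$.

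The argument splits into two cases according to whether $P$ contains an isolated vertex of $G$. If some $a\in P$ is isolated, then $x_a$ consumes $\ell_1$-mass without contributing to $F_G$, since no edge is incident to $a$. Letting $y$ agree with $x$ except that $y_a=0$ gives $F_G(y)=F_G(x)$ and $\|y\|_1=1-x_a\in(0,1)$, the strict upper bound being a consequence of $|P|\ge 2$, which forces $x_a<1$. Rescaling $y':=y/\|y\|_1$ then yields $\|y'\|_1=1$ and, by homogeneity, $F_G(y')=F_G(x)/(1-x_a)^2>F_G(x)$, contradicting optimality.

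If no vertex of $P$ is isolated, I pick two distinct $a_1,a_2\in P$ and set $v=e_{a_2}-e_{a_1}$. For $\epsilon>0$ smaller than $\min\{x_{a_1},x_{a_2}\}$, both perturbations $x\pm\epsilon v$ preserve the sign of every coordinate and the changes in $|x_{a_1}|$ and $|x_{a_2}|$ cancel, so $\|x\pm\epsilon v\|_1=1$. Expanding $F_G(x\pm\epsilon v)=F_G(x)\pm 2\epsilon\, v^T L x+\epsilon^2 F_G(v)$ and averaging,
\[
\tfrac{1}{2}\bigl(F_G(x+\epsilon v)+F_G(x-\epsilon v)\bigr)=F_G(x)+\epsilon^2 F_G(v),
\]
so it suffices to show $F_G(v)>0$. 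A direct count of edge contributions gives $F_G(v)=d_{a_1}+d_{a_2}+2d_{a_1 a_2}$, which is positive because $a_1$ has at least one neighbour. Hence at least one of $F_G(x+\epsilon v),F_G(x-\epsilon v)$ strictly exceeds $F_G(x)$, again contradicting optimality.

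The main obstacle is that the clean perturbation $v=e_{a_2}-e_{a_1}$ only increases $F_G$ when $v\notin\ker L$, which fails precisely when $a_1$ and $a_2$ are both isolated and form singleton components, in which case the quadratic correction vanishes. Case 1 is designed exactly to deal with this degeneracy: since no local perturbation can gain anything from an isolated vertex, I instead zero out its coordinate and rescale, which genuinely increases $F_G$ because the denominator $\|y\|_1$ drops strictly below $1$.
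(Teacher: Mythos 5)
Your proof is correct. It is, at its core, the same mechanism as the paper's: restrict $F_G$ to the $\ell_1$-preserving line through $x$ in the direction $e_{a_2}-e_{a_1}$, observe that $F_G$ is strictly convex along that line because $v^TLv=d_{a_1}+d_{a_2}+2d_{a_1a_2}>0$, and conclude that an interior point of the segment cannot be a maximizer. The paper realizes this by transferring the \emph{entire} mass of one coordinate to the other in each of the two directions and showing that the positive combination $\Delta'/x_b+\Delta''/x_a=(d_a+d_b+d_{ab})(x_a+x_b)$ is positive, whereas you use a symmetric infinitesimal perturbation and average; these are two presentations of the same convexity argument. Where you genuinely add something is your Case 1: the paper's identity yields nothing when $a$ and $b$ are both isolated (the right-hand side is then $0$), and the paper never rules out an optimizer placing positive weight on isolated vertices; your zero-out-and-rescale step closes exactly that degenerate case. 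Two small points to tighten: in Case 1 the conclusion $F_G(y')>F_G(x)$ needs $F_G(x)>0$, which holds because $G$ has at least one edge (so $\mu^{(1)}(G)\ge\Delta(G)\ge 1$) but should be said; and in Case 2 it is worth noting explicitly that $x\pm\epsilon v$ remains feasible because both perturbed coordinates stay positive, which you do assert via the choice $\epsilon<\min\{x_{a_1},x_{a_2}\}$.
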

\begin{proof}
Let $x$ be as above. Without loss of generality, suppose $a,b\in P$ and define $x'$ and $x''$ as
\begin{eqnarray*}
  x'_k=
\begin{cases}
    x_a+x_b & \mbox{ if } k=a;\\
    0 &       \mbox{ if } k=b;\\
    x_k &     \mbox{ otherwise. }
\end{cases} & \mbox{ and } &
x''_k=
\begin{cases}
    0 &       \mbox{ if } k=a;\\
    x_a+x_b & \mbox{ if } k=b;\\
    x_k &     \mbox{ otherwise. }
\end{cases}
\end{eqnarray*}
Consider the differences $\Delta'=F(x')-F(x)$ e $\Delta''=F(x'')-F(x)$.
\begin{eqnarray*}
\Delta'=(d_a-d_{ab})(2x_ax_b+x_b^2)-2x_b\sum_{aj\in E,j\ne b} x_j
       -d_bx_b^2+2x_b\sum_{bj\in E,j\ne a} x_j
        + 4d_{ab}x_ax_b
\end{eqnarray*}
The expression for $\Delta''$ can be readily obtained switching the roles of $a$ and $b$. As $x_a,x_b>0$ we can take
\[
\frac{\Delta'}{x_b}+\frac{\Delta''}{x_a}=(d_a+d_b+d_{ab})(x_a+x_b)>0,
\]
so that at least one of the differences $\Delta'$ and $\Delta''$ is positive. This contradicts the maximality of $x$.
\end{proof}

So we can assume that $|P|=|N|=1$.

\begin{lemma}\label{mu1b}
Let $x\in\R^n$ such that $\|x\|_1=1$, $P=\{a\}$, $N=\{b\}$ and $d_a\ge d_b$. Then $d_a=F(e_a)\ge F(x)$, with equality if and only if $d_a=d_b=d_{ab}$.
\end{lemma}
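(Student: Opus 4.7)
The plan is to exploit the fact that $x$ has only two nonzero coordinates: setting $x_a=s>0$ and $x_b=-t<0$ with $s+t=1$ (from $\|x\|_1=1$) reduces $F(x)$ to an explicit quadratic in $s$ and $t$, which can then be compared with $F(e_a)=d_a$ directly.

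The first step is to expand $F(x)=\sum_{ij\in E}(x_i-x_j)^2$ by classifying edges of $G$ according to their incidence with $\{a,b\}$. Only three edge types contribute: the edge $ab$ (if present) contributes $(s+t)^2=1$; each of the $d_a-d_{ab}$ edges from $a$ to a vertex outside $\{a,b\}$ contributes $s^2$; and each of the $d_b-d_{ab}$ edges from $b$ to a vertex outside $\{a,b\}$ contributes $t^2$. Together with the trivial observation $F(e_a)=d_a$ (only the edges incident to $a$ contribute, each with value $1$), this puts the inequality into closed form.

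The second step is the comparison itself. Writing $d_a=d_a(s+t)^2$ and simplifying, the difference $d_a-F(x)$ should factor as $t\bigl[(d_a-d_b)\,t + 2(d_a-d_{ab})\,s\bigr]$, in which both bracketed terms are nonnegative: the first by the hypothesis $d_a\ge d_b$, and the second by the trivial bound $d_{ab}\le d_a$. This establishes the inequality. For the equality characterization, since $s,t>0$, the bracket vanishes only when both summands vanish individually, which is precisely the condition $d_a=d_b=d_{ab}$.

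I do not expect any serious obstacle: the only mild subtlety is keeping careful track of the edge $ab$ so that it is not double-counted among the edges through $a$ and the edges through $b$. Once this bookkeeping is handled, the argument is a short algebraic identity in the single free parameter $s\in(0,1)$.
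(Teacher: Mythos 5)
Your proof is correct and follows essentially the same route as the paper: both expand $F(x)$ over the three relevant edge classes (the edge $ab$, the $d_a-d_{ab}$ other edges at $a$, and the $d_b-d_{ab}$ other edges at $b$) and then use $d_a\ge d_b$ and $d_a\ge d_{ab}$ to compare with $d_a$. Your factorization $d_a-F(x)=t\bigl[(d_a-d_b)t+2(d_a-d_{ab})s\bigr]$ is just an algebraic reorganization of the paper's two chained inequalities, and it yields the same equality characterization.
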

\begin{proof}
Note that $x_a^2+x_b^2<1$, because $|x_a|+|x_b|=1$. Then
\begin{eqnarray*}
F(x)&=&  d_ax_a^2+d_bx_b^2+d_{ab}(1-x_a^2-x_b^2) \\
      & \le & d_a(x_a^2+x_b^2)+d_{ab}(1-x_a^2-x_b^2) \le d_a=F(e_a).
\end{eqnarray*}
The first and second inequalities become equalities if and only if $d_a=d_b$ and $d_a=d_{ab}$, respectively.
\end{proof}

So $\mu^{(1)}(G)$ is obtained for a vector $e_a$ for a vertex $a$ with maximum degree. That proves item (a) of Theorem \ref{t:main}. Note that the solutions are always of this form if the maximum degree is at least 2, because the equality situation of Lemma \ref{mu1b} is of interest only if the maximum degree is one. For instance, for $G=K_2$, any feasible vector attains the maximum.

Now we proceed to prove item (b), which states that $\mu^{(\infty)}(G)/4$ is equal to the size of a maximum cut  of $G$. 
In this case, the problem is of the form
\[ \mu^{(\infty)}(G) =\max_{\max_i |x_i|=1}\sum_{ij\in E} (x_i-x_j)^{2}. \]

\begin{lemma}\label{muinf}
Let $x\in\R^n$ such that $\max_i |x_i|=1$ and $F_G(x)=\mu^{(\infty)}(G)$. Then $|x_i|=1$, for all $i\in V$.
\end{lemma}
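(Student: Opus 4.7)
The plan is to argue coordinate-wise. Fix any vertex $k\in V$ and view $F_G(x)=\sum_{ij\in E}(x_i-x_j)^2$ as a function of the single variable $x_k$, with all other entries frozen. The part that depends on $x_k$ collects into the quadratic
\[
d_k\, x_k^2 \;-\; 2 x_k \!\sum_{j:\, kj\in E} x_j,
\]
whose leading coefficient is the degree $d_k\ge 0$.

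If $d_k\ge 1$, the restricted function is strictly convex in $x_k$, so on the interval $[-1,1]$ (which must contain $x_k$ since $\max_i|x_i|=1$) the maximum is attained \emph{only} at one of the endpoints $\pm 1$. Hence at any maximizer of $F_G$ one must already have $x_k\in\{-1,+1\}$: otherwise, moving $x_k$ to whichever endpoint maximizes the quadratic would strictly increase $F_G(x)$, contradicting the maximality of $x$. Applying this reasoning to every vertex of positive degree yields $|x_i|=1$ for all such $i$.

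The only subtlety is an isolated vertex ($d_k=0$). There, the quadratic collapses to a constant in $x_k$, so $F_G$ does not depend on that coordinate and any value in $[-1,1]$ is equally optimal. This does not contradict the statement; it simply means the conclusion has to be read as permitting the harmless replacement of $x_k$ by $\pm 1$ in the isolated case, which keeps the objective unchanged and produces a maximizer with all $|x_i|=1$. This minor bookkeeping is the only real obstacle: everything else is an immediate one-variable convexity observation, and the lemma then feeds directly into the proof of Theorem~\ref{t:main}(b), since on the cube $\{-1,+1\}^n$ the objective $\sum_{ij\in E}(x_i-x_j)^2$ counts $4$ times the size of the cut induced by the sign pattern of $x$.
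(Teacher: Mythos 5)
Your proof is correct and is essentially the paper's argument in different clothing: the paper perturbs the offending coordinate $x_a$ to both endpoints $\pm 1$ and shows that a positive combination of the two increments equals $2d_a>0$, which is exactly your observation that the restriction of $F_G$ to the single coordinate $x_a$ is a strictly convex quadratic (when $d_a\ge 1$) and hence is maximized on $[-1,1]$ only at an endpoint. Your explicit treatment of isolated vertices is a point the paper silently assumes away (its inequality $2d_a>0$ likewise requires $d_a\ge 1$), so that caveat is a welcome addition rather than a defect.
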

\begin{proof}
Let $x$ be as stated above. Suppose that there is $a\in V$ with $-1<x_a<1$. Define $x',x''\in\mathbb{R}^n$ as
\begin{eqnarray*}
  x'_i=
\begin{cases}
    1   & \mbox{ if } i=a;\\
    x_i & \mbox{ otherwise. }
\end{cases} & \mbox{ and } &
x''_i=
\begin{cases}
    -1  & \mbox{ if } i=a;\\
    x_i & \mbox{ otherwise. }
\end{cases}
\end{eqnarray*}
Consider the differences $\Delta'=F(x')-F(x)$ and $\Delta''=F(x'')-F(x)$. Then
\begin{eqnarray*}
\Delta'=d_a(1-x_a^2)-2(1-x_a)\sum_{aj\in E} x_j
\end{eqnarray*}
and similarly
\begin{eqnarray*}
\Delta''=d_a(1-x_a^2)+2(1+x_a)\sum_{aj\in E} x_j,
\end{eqnarray*}
and therefore 
$$\frac{\Delta'}{1-x_a}+\frac{\Delta''}{1+x_a}=2d_a>0.$$
So at least one of the differences $\Delta'$ and $\Delta''$ is positive. This contradicts the maximality of $x$.
\end{proof}

Now for a vector $x$ in the form given by Lemma \ref{muinf} let $S=\{i\in V : x_i=1\}$ and $T=\{i\in V : x_i=-1\}$. So
\begin{eqnarray*}
F(x)=\sum\limits_{i\in S, j\in T}(x_i-x_j)^2=4\cut(S,T).
\end{eqnarray*}
Then of course $F_G(x)=\mu^{(\infty)}(G)$ if $\cut(S,T)$ is a maximum cut. That proves item (a) of Theorem \ref{t:main}. Also, the maximum among graphs of order $n$ is
\begin{eqnarray*}
  \mu^{(\infty)}(K_n)=\mu^{(\infty)}(K_{\lfloor n/2 \rfloor,\lceil n/2 \rceil})=
\begin{cases}
   n^2     & \mbox{ if $n$ is even} ;\\
   n^2-1   & \mbox{ if $n$ is odd} .
\end{cases}
\end{eqnarray*}

Finally we prove item (c), which shows properties of the function $f_G : [1, \infty) \to \R$ defined by $f_G(p)=\mu^{(p)}(G)$. Namely, the function is strictly increasing (Lemma \ref{mu_cresc}), continuous (Lemma \ref{mu_cont}) and converges when $p\to\infty$ (Lemma \ref{mu_limite}). 
First we state two technical lemmas that will be useful.

\begin{lemma}\label{pnorm_bound}
Let $q\ge p\ge 1$. Then for $x\in\mathbb{R}^n$,
\[\|x\|_{q}\le \|x\|_p\le n^{\frac{1}{p}-\frac{1}{q}}\|x\|_{q}.\]
Furthermore. $|x_i^*|=n^{-1/q}\|x\|_{q}$ holds for a nonzero vector $x^*$ that attains the upper bound.
\end{lemma}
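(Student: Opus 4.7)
The inequalities are the standard monotonicity and Hölder-type comparison between $\ell^p$-norms on $\R^n$, so my plan is to treat each bound separately with a short, self-contained argument, and then read off the equality case from Hölder.

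For the lower bound $\|x\|_q \le \|x\|_p$, the plan is to reduce to the normalized case. Assume $x \ne 0$ (otherwise the statement is trivial) and divide through by $\|x\|_p$, so we may suppose $\|x\|_p = 1$. Then every coordinate satisfies $|x_i| \le 1$, and since $q \ge p$ this gives $|x_i|^q \le |x_i|^p$ coordinatewise. Summing and taking the $q$-th root yields $\|x\|_q \le 1 = \|x\|_p$.

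For the upper bound, the idea is to apply Hölder's inequality with conjugate exponents $r = q/p$ and $r' = q/(q-p)$ (the case $q = p$ being immediate). Writing $|x_i|^p = |x_i|^p \cdot 1$ and applying Hölder, I would obtain
\[
\|x\|_p^p \;=\; \sum_{i=1}^n |x_i|^p \cdot 1 \;\le\; \left(\sum_{i=1}^n |x_i|^q\right)^{p/q} \left(\sum_{i=1}^n 1\right)^{(q-p)/q} = \|x\|_q^{p}\, n^{(q-p)/q}.
\]
Taking $p$-th roots rewrites the right-hand side as $n^{1/p-1/q}\|x\|_q$, which is the stated bound.

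For the characterization of equality in the upper bound, I would invoke the equality condition in Hölder's inequality: the two factors are proportional as functions of $i$, i.e.\ $|x_i|^p$ is constant in $i$. Thus $|x_i^*|$ is the same value $c$ for every coordinate of an extremal vector, and computing $\|x^*\|_q = c\, n^{1/q}$ gives $c = n^{-1/q}\|x^*\|_q$ as claimed. The only potential subtlety is treating the boundary case $q = p$ (where the upper bound is an equality with no constraint on $x^*$) and the trivial case $x^* = 0$, both of which can be dispatched in one line at the start; the main calculation is otherwise a textbook application of Hölder and should not present an obstacle.
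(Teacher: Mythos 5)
Your proposal is correct and follows essentially the same route as the paper: the paper's one-line argument normalizes $\|x\|_q=1$, cites monotonicity of the $p$-norm for the lower bound, and invokes the power mean inequality for the upper bound together with its equality case (all entries equal to $n^{-1/q}$). Your Hölder computation with exponents $q/p$ and $q/(q-p)$ is precisely the standard proof of that power mean inequality, and your equality analysis recovers the same characterization, so the two arguments differ only in presentation.
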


\begin{proof}
Without loss of generality, we can consider that $x$ has positive entries and $\|x\|_{q}=1$. The lower bound holds because the $p$-norm is decreasing on $p$, and it is attained by $e_i$. By applying the power mean inequality to the entries of $nx$, we can see that the upper bound is attained if and only if all entries are $n^{-1/q}$.
\end{proof}

\begin{lemma}\label{mu2p_lbound}
Let $G=(V,E)$ be a graph and $x\in R^n$ with $\|x\|_p=1$ and $p\ge2$. Then $F_G(x)\le n^{1-2/p}\mu^{(2)}(G)$.
\end{lemma}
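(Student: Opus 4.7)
The plan is to exploit two simple facts: the quadratic form $F_G$ is homogeneous of degree $2$, and the $2$-norm of a vector with unit $p$-norm is bounded in terms of $n$ and $p$.

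First I would invoke Lemma \ref{pnorm_bound} with the roles $q=p\ge 2$ and exponent $2$ in the role of the smaller parameter. The lemma gives
\[
\|x\|_2 \;\le\; n^{\frac{1}{2}-\frac{1}{p}}\,\|x\|_p \;=\; n^{\frac{1}{2}-\frac{1}{p}},
\]
since $\|x\|_p=1$.

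Next I would use homogeneity. The map $x\mapsto F_G(x)=\sum_{ij\in E}(x_i-x_j)^2$ satisfies $F_G(cx)=c^2F_G(x)$ for every scalar $c$. If $x=0$ the statement is trivial; otherwise set $y=x/\|x\|_2$, so that $\|y\|_2=1$ and, by the definition of $\mu^{(2)}(G)$, $F_G(y)\le \mu^{(2)}(G)$. Multiplying back,
\[
F_G(x)\;=\;\|x\|_2^{\,2}\,F_G(y)\;\le\;\|x\|_2^{\,2}\,\mu^{(2)}(G)\;\le\;n^{1-2/p}\,\mu^{(2)}(G),
\]
using the bound on $\|x\|_2$ from the previous step. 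This proves the claim.

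There is no real obstacle here: the only content is the norm-interpolation inequality of Lemma \ref{pnorm_bound}, which is provided. The proof is essentially a one-line consequence of the homogeneity of $F_G$ together with that lemma; the hypothesis $p\ge 2$ is needed only to make the exponent $1/2-1/p$ nonnegative so that Lemma \ref{pnorm_bound} applies in the correct direction.
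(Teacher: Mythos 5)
Your argument is correct and coincides with the paper's own proof: the paper also bounds $F_G(x)\le \|x\|_2^2\,\mu^{(2)}(G)$ (citing Rayleigh--Ritz, which is exactly your homogeneity-plus-definition step) and then applies Lemma \ref{pnorm_bound} to get $\|x\|_2^2\le n^{1-2/p}$. Your write-up is in fact slightly cleaner, since it makes explicit the normalization $y=x/\|x\|_2$ and the role of $p\ge 2$ in the direction of the norm inequality.
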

\begin{proof}
By Rayleigh-Ritz theorem, we have $F_G(x)\le \|x\|_2^2\mu^{(2)}(G)$ for $x\ne 0\in \mathbb{R}^n$.
Using Lemma \ref{pnorm_bound}, we obtain $F_G(x)\le \|x^*\|_p^2\mu^{(2)}(G)=n^{1-2/p}\mu(G)$.
\end{proof}

The proof will be broken down in three lemmas, one for each result.

\begin{lemma}\label{mu_cresc}
For a graph $G$ and $p\ge 1$, $\mu^{(p)}(G)$ is strictly increasing in $p$.
\end{lemma}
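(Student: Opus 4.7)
The plan is to fix $1 \leq p < q$ and, starting from a maximizer $x^\ast$ for $\mu^{(p)}(G)$ with $\|x^\ast\|_p = 1$ (which exists by compactness of the unit $p$-sphere), produce a vector $y$ with $\|y\|_q = 1$ and $F_G(y) > F_G(x^\ast)$. Lemma \ref{pnorm_bound} gives $\|x^\ast\|_q \leq \|x^\ast\|_p = 1$, and the key observation is that, writing $s_i = |x^\ast_i|$, the constraint $\sum_i s_i^p = 1$ forces each $s_i \in [0,1]$, so $s_i^q \leq s_i^p$ with strict inequality whenever $0 < s_i < 1$; in particular $\|x^\ast\|_q < 1$ whenever $x^\ast$ has two or more nonzero coordinates. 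In that case $y = x^\ast/\|x^\ast\|_q$ satisfies $\|y\|_q = 1$ and
\[ F_G(y) = \frac{F_G(x^\ast)}{\|x^\ast\|_q^2} > F_G(x^\ast) = \mu^{(p)}(G), \]
immediately giving $\mu^{(q)}(G) > \mu^{(p)}(G)$.

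The remaining case is $x^\ast = \pm e_a$ for some vertex $a$, where $\mu^{(p)}(G) = d_a$. Since $F_G(e_c) = d_c$ and $\|e_c\|_p = 1$ for every $c$, the Rayleigh quotient comparison gives $d_a \geq 1$ (using that $G$ has an edge), so $a$ has a neighbor $b$. Because $F_G$ is invariant under sign flip, we may take $x^\ast = e_a$. The idea is to perturb toward $b$: for $\epsilon > 0$, set $z_\epsilon = e_a - \epsilon e_b$, and tally edge contributions to obtain $F_G(z_\epsilon) = d_a + 2\epsilon + d_b\epsilon^2$ and $\|z_\epsilon\|_q = (1+\epsilon^q)^{1/q}$. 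Therefore
\[ F_G\!\left(\frac{z_\epsilon}{\|z_\epsilon\|_q}\right) = \frac{d_a + 2\epsilon + d_b\epsilon^2}{(1+\epsilon^q)^{2/q}} = d_a + 2\epsilon + o(\epsilon) \qquad (\epsilon \to 0^+), \]
since $q > p \geq 1$ ensures $\epsilon^q = o(\epsilon)$. For sufficiently small $\epsilon > 0$ the right-hand side strictly exceeds $d_a$, yielding $\mu^{(q)}(G) > \mu^{(p)}(G)$.

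The main obstacle is this second case: the rescaling argument cannot improve $x^\ast$ when it is concentrated on a single vertex, because then its $p$-norm does not depend on $p$. The perturbation $z_\epsilon$ circumvents this by exploiting that, for $q > 1$, the $q$-norm reacts to the newly introduced coordinate only at order $\epsilon^q = o(\epsilon)$, whereas $F_G$ picks up a genuine linear gain of $2\epsilon$ from the edge $ab$. Everything else, including the first-order expansion, is routine.
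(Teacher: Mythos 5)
Your proof is correct, and it shares its main engine with the paper's: renormalize a $p$-maximizer $x^\ast$ in the $q$-norm and note that this strictly increases the objective unless $x^\ast$ is supported on a single coordinate (the paper's inequality (\ref{inc_quota}), with equality iff $x=\pm e_a$). Where you genuinely diverge is in how that degenerate case is excluded. The paper writes down the first-order stationarity (Lagrange) conditions of the constrained problem for exponent greater than $1$, checks that $e_a$ violates the equation at a neighbor $j$ of $a$, and then handles $p=1$ by a separate appeal to the already-established identity $\mu^{(1)}(G)=F(e_a)=\Delta(G)$. You instead exhibit an explicit improving direction: perturb to $z_\epsilon=e_a-\epsilon e_b$ for a neighbor $b$, so the objective gains $2\epsilon$ at first order while the $q$-norm penalty is only $O(\epsilon^{q})=o(\epsilon)$ because $q>1$; your computation $F_G(z_\epsilon)=d_a+2\epsilon+d_b\epsilon^2$ and $\|z_\epsilon\|_q=(1+\epsilon^q)^{1/q}$ is right. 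Your route is more elementary and uniform -- it needs no differentiability of $t\mapsto|t|^p$, no multiplier formalism, and no separate treatment of $p=1$ -- at the cost of one explicit edge-by-edge tally. The only point worth stating explicitly in your first case is that $F_G(x^\ast)=\mu^{(p)}(G)>0$ (it is at least the maximum degree, and $G$ has an edge), so that dividing by $\|x^\ast\|_q^{2}<1$ really yields a strict increase.
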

\begin{proof}
Let $x\in\R^n$ such that $\|x\|_p=1$ and $F(x)=\mu^{(p)}(G)$, and $p'>p> 1$. Define $x':=x/\|x\|_{p'}$. As $\|x\|_{p'}\le 1$, we have
\begin{equation}\label{inc_quota}
\mu^{(p')}(G)\ge F(x')=\frac{1}{\|x\|_{p'}^2}F(x)\ge\mu^{(p)}(G). 
\end{equation} 
As $G$ has at least one edge $ij$, $\mu^{(p)}(G)>0$; pick $x$ such that $x_i=-x_j=2^{-1/p}$, and $x_i=0$ otherwise. Equality holds in equation \ref{inc_quota} if and only if $x=e_i$ for some $i$. We argue now that for $p>1$, $e_i$ never attains the maximum, so that $\mu^{(p)}(G)$ is strictly increasing.

For $p>1$, the stationarity conditions of the problem are $Lx=\lambda\nabla_x (|x_1|^p+\cdots+|x_n|^p-1)$. Note that $x\to|x|^p$ is differentiable for $p>1$. The $j$-th equation is
\begin{equation}\label{lagrange_j}
 d_jx_j-\sum_{jk\in E}x_k=
\begin{cases}
p|x_j|^{p-1}\sg{x_j},  & \mbox{if } x_j\ne 0;\\
0, & \mbox{if } x_j=0.
\end{cases}
\end{equation}

Without loss of generality, assume $G$ has no isolated vertices (as they don't contribute to the sum in $F$). Let $i\in V$ and $j$ a neighbor of $i$. Taking $x=e_i$, then $x_k=0$ if $k\ne i$; in particular, $x_j=0$. Then the right hand side of (\ref{lagrange_j}) is 0, and the left hand side is $d_jx_j-\sum_{jk\in E}x_k=0-x_i=-1$. Therefore, $e_i$ doesn't satisfy the optimality conditions of the problem, that is, for any $i\in V$, $F(e_i)<\mu^{(p)}(G)$ for $p>1$.

With this last statement in mind, recall that, by the proof of item a of Theorem \ref{t:main}, $\mu^{(1)}(G)=F(e_i)$ for $i$ with maximum degree. Therefore, we conclude that $\mu^{(1)}(G)<\mu^{(p)}(G)$ for $p>1$. This completes the proof.
 
\end{proof}

\begin{lemma}\label{mu_cont}
For any graph $G$ and $p\ge 1$, the function $p\to\mu^{(p)}(G)$ is continuous.
\end{lemma}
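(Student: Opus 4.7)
My plan is to sandwich $\mu^{(p)}(G)$ between $\mu^{(p_0)}(G)$ and an explicit constant multiple of it, where the constant tends to $1$ as $p\to p_0$. The comparison goes through the two-sided norm inequality of Lemma~\ref{pnorm_bound}, which lets us rescale a maximizer for one $p$ into a feasible vector for the other.

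Concretely, fix $p_0\ge 1$ and consider any $p$ with, say, $p\ge p_0$. If $y$ attains $\mu^{(p)}(G)$ with $\|y\|_p=1$, then Lemma~\ref{pnorm_bound} gives $\|y\|_{p_0}\le n^{1/p_0-1/p}$. Setting $y':=y/\|y\|_{p_0}$ produces a feasible vector for the $p_0$-problem, and since $F_G$ is quadratic,
\[
\mu^{(p_0)}(G)\ge F_G(y')=\frac{F_G(y)}{\|y\|_{p_0}^2}\ge \frac{\mu^{(p)}(G)}{n^{2/p_0-2/p}}.
\]
Combined with the monotonicity proved in Lemma~\ref{mu_cresc}, this yields
\[
\mu^{(p_0)}(G)\le \mu^{(p)}(G)\le n^{2/p_0-2/p}\,\mu^{(p_0)}(G),
\]
so right-continuity at $p_0$ follows by letting $p\to p_0^+$ and noting that $n^{2/p_0-2/p}\to 1$.

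For left-continuity, fix $p_0>1$ and take $p$ with $1\le p<p_0$. The analogous scaling—now applied to a maximizer of the $p_0$-problem, using the other direction of Lemma~\ref{pnorm_bound} to compare $\|\cdot\|_p$ and $\|\cdot\|_{p_0}$—gives
\[
n^{2/p_0-2/p}\,\mu^{(p_0)}(G)\le \mu^{(p)}(G)\le \mu^{(p_0)}(G),
\]
and letting $p\to p_0^-$ finishes the argument. The case $p_0=1$ requires only the right-continuous half, which we already established.

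I don't anticipate a serious obstacle: the argument is entirely elementary once one observes that \emph{feasibility} for the constraint $\|x\|_p=1$ is cheap to restore by scalar multiplication, and that Lemma~\ref{pnorm_bound} quantifies how much the scaling factor distorts the value of the (quadratic) objective. The only mild subtlety is remembering that $F_G$ scales by a \emph{square}, so the controlling factor is $n^{2/p_0-2/p}$ rather than $n^{1/p_0-1/p}$; this still converges to $1$ as $p\to p_0$, so the sandwich closes.
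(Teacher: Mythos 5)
Your proposal is correct and follows essentially the same route as the paper: rescale a maximizer for the larger exponent into a feasible vector for the smaller one, use Lemma~\ref{pnorm_bound} to bound the distortion factor $n^{2/p_0-2/p}$ of the quadratic form, and combine with the monotonicity of Lemma~\ref{mu_cresc} to close the sandwich. The only cosmetic difference is that you treat the two one-sided limits symmetrically, while the paper additionally invokes the bound $\mu^{(p)}(G)\le n^{2-2/p}$ to extract an explicit modulus of continuity; both arguments are sound.
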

\begin{proof}
Let $x'\in\R^n$ such that $\|x'\|_{p'}=1$ and $F(x')=\mu^{(p')}(G)$, and $p'>p\ge 1$. By Lemma \ref{pnorm_bound} that $\|x'\|_p\le n^{\frac{1}{p}-\frac{1}{p'}}\|x'\|_{p'}$. Define $x:=x'/\|x'\|_{p}$. Then
\[ \mu^{(p')}(G)=F(x')=\|x'\|_p^2 F(x)\le\ n^{\frac{2}{p}-\frac{2}{p'}}\|x'\|_{p'}\mu^{(p)}(G)=n^{\frac{2}{p}-\frac{2}{p'}}\mu^{(p)}(G) \]
By Lemma \ref{mu_cresc}, we know that $\mu^{(p')}(G)>\mu^{(p)}(G)>0$. We also know from spectral graph theory (check for example \cite{Cve79}) that $\mu^{(2)}(G)\le\mu^{(2)}(K_n)=n$. Combining this with Lemma \ref{mu2p_lbound}, we have $\mu^{(p)}(G)\le n^{2-2/p}$ para $p\ge 2$; as $\mu^{(p)}(G)$ is strictly increasing in $p$ (Lemma \ref{mu_cresc}), this bound holds for $p\ge 1$. So

\begin{eqnarray*}
  \mu^{(p')}(G)-\mu^{(p)}(G)
  &\le& n^{\frac{2}{p}-\frac{2}{p'}}\mu^{(p)}(G)-\mu^{(p)}(G) \\
  &\le& \left(n^{\frac{2}{p}-\frac{2}{p'}}-1\right) n^{2-2/p}\\
  &<  & (n^{2(p'-p)}-1)n^2.
\end{eqnarray*}
So we have $\mu^{(p')}(G)-\mu^{(p)}(G)<\epsilon$ if $p'-p<\frac{1}{2}\log_n(\epsilon/n^2+1)$.
\end{proof}

\begin{lemma}\label{mu_limite}
For any graph $G$,
\[ \lim_{p\rightarrow\infty} \mu^{(p)}(G)=\mu^{(\infty)}(G). \]
\end{lemma}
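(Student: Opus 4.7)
The plan is to sandwich $\mu^{(p)}(G)$ between two explicit quantities that both tend to $\mu^{(\infty)}(G)$ as $p\to\infty$, namely
\[ \frac{\mu^{(\infty)}(G)}{n^{2/p}} \;\le\; \mu^{(p)}(G) \;\le\; \mu^{(\infty)}(G), \]
and then let $p\to\infty$.

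For the upper bound, I would take any $x\in\R^n$ with $\|x\|_p=1$ (and $x\neq 0$, the trivial case being immediate). Since $\|x\|_\infty\le\|x\|_p=1$, the rescaled vector $y=x/\|x\|_\infty$ satisfies $\|y\|_\infty=1$, so $F_G(y)\le\mu^{(\infty)}(G)$. Using that $F_G$ is homogeneous of degree two,
\[ F_G(x)=\|x\|_\infty^{2}\,F_G(y)\le \|x\|_\infty^{2}\,\mu^{(\infty)}(G)\le \mu^{(\infty)}(G). \]
Taking the supremum over feasible $x$ produces the upper bound.

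For the lower bound, I would appeal to Lemma \ref{muinf}: any optimizer $x^{\ast}$ of $\mu^{(\infty)}(G)$ satisfies $|x^{\ast}_i|=1$ for every $i\in V$, so $\|x^{\ast}\|_p=n^{1/p}$. The rescaled vector $z=x^{\ast}/n^{1/p}$ is then feasible for the $p$-problem, and homogeneity of $F_G$ gives $F_G(z)=F_G(x^{\ast})/n^{2/p}=\mu^{(\infty)}(G)/n^{2/p}$, yielding the stated lower bound on $\mu^{(p)}(G)$.

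Since $n^{2/p}\to 1$ as $p\to\infty$, the squeeze theorem delivers the claim. I do not anticipate a serious obstacle: Lemma \ref{muinf} already did the nontrivial work by guaranteeing that an $\infty$-maximizer has all entries equal to $\pm 1$, and what remains is just homogeneity of $F_G$ combined with the standard inequality $\|x\|_\infty\le\|x\|_p$. As a cross-check, monotonicity of $\mu^{(p)}(G)$ in $p$ (Lemma \ref{mu_cresc}) together with the upper bound above already guarantees that the limit exists, which matches the conclusion supplied by the squeeze.
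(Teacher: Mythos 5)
Your proposal is correct, and it is cleaner than the paper's own argument. The paper works only with the $p$-optimizer $x$: it rescales $x' = x/\max_i|x_i|$ to an $\infty$-feasible vector, uses $\max_i|x_i|\ge n^{-1/p}$ (from Lemma \ref{pnorm_bound}), and then bounds $\mu^{(\infty)}(G)-\mu^{(p)}(G)$ via an auxiliary integer $N=N(x')$ whose dependence on $p$ is not controlled in the write-up; as stated, the chain $(\max|x_i|)^2F(x')>(\max|x_i|)^N\mu^{(\infty)}(G)$ only gives information if one tracks $N$ uniformly in $p$, which the paper does not do. Your sandwich avoids this entirely: the upper bound $\mu^{(p)}(G)\le\mu^{(\infty)}(G)$ is the same rescaling idea as the paper's (a $p$-feasible vector rescaled by $\|x\|_\infty$ becomes $\infty$-feasible), while the lower bound $\mu^{(p)}(G)\ge \mu^{(\infty)}(G)/n^{2/p}$ comes from rescaling the $\infty$-optimizer into the $p$-ball, which is the ingredient the paper's proof lacks and which is the same mechanism already used in Lemma \ref{mu_cont} (formally the case $p'=\infty$ of $\mu^{(p')}(G)\le n^{2/p-2/p'}\mu^{(p)}(G)$). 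One small remark: you do not actually need the full strength of Lemma \ref{muinf} for the lower bound --- any $\infty$-feasible $x$ satisfies $\|x\|_p\le n^{1/p}$, so taking the supremum after rescaling gives the same inequality --- but invoking it is harmless and makes the computation of $\|x^{\ast}\|_p$ explicit.
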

\begin{proof}
For a given $p$, let $x$ such that $\|x\|_p=1$ and $F(x)=\mu^{(p)}(G)$. By the proof of Lemma \ref{mu_cresc}, we know that $x\ne e_i$, so $\max_i|x_i|<1$. Define $x':=x/\max|x_i|$. 
We can choose $N=N(x')\in\N$ such that 

\[ \mu^{(p)}(G)=F(x)=(\max|x_i|)^2F(x')>(\max|x_i|)^N\mu^{(\infty)}(G), \]
so that $0<\mu^{(\infty)}(G)-\mu^{(p)}(G)<(1-(\max|x_i|)^N)\mu^{(\infty)}(G).$ One can check that $\max|x_i|\ge n^{-1/p}$. 
The proof concludes noting that
\[ 0<\mu^{(\infty)}(G)-\mu^{(p)}(G)<(1-n^{-N/p})\mu^{(\infty)}(G), \]
and $n^{-N/p}\rightarrow 1$ when $p\rightarrow\infty$.

\end{proof}

\section{Proof of Theorem \ref{t:mu_subg_bip}}\label{s:t2}

In this section we prove Theorem \ref{t:mu_subg_bip}, which establishes the upper bound $\mu^{(p)}(G)\le n^{2-2/p}$ for $p\ge 2$, as well as a necessary and sufficient condition for equality. We denote $G=(S,T,E)$ a bipartite graph with vertex classes $S$ and $T$. First we state three auxiliary lemmas.
\begin{lemma}\label{mu1_same_sign}
Let $G=(S,T,E)$ be a bipartite graph, and $x\in\R^n $ such that $\|x\|_p=1$ and $F(x)=\mu^{(p)}(G)$. Then for $x$ or $-x$ we have $P=S$ and $N=T$.
\end{lemma}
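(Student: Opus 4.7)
The plan is to symmetrize $x$ into a vector $y$ whose sign pattern is compatible with the bipartition $S\cup T$, show that $F(y)\ge F(x)$, and then combine the maximality of $x$ with the Lagrangian first-order conditions to force both the bipartite sign structure and the absence of zero entries.

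Concretely, I would set $y_i=|x_i|$ for $i\in S$ and $y_i=-|x_i|$ for $i\in T$. Since $|y_i|=|x_i|$ pointwise, $\|y\|_p=\|x\|_p=1$, so $y$ is feasible. For each edge $ij\in E$ with $i\in S$, $j\in T$, the elementary identity
\[
(y_i-y_j)^2 - (x_i-x_j)^2 \;=\; 2\bigl(|x_i||x_j|+x_ix_j\bigr) \;\ge\; 0
\]
holds, because $x_ix_j\ge -|x_i||x_j|$. Summing over all edges yields $F(y)\ge F(x)=\mu^{(p)}(G)$, whence $F(y)=F(x)$ with equality on every edge. Equality on edge $ij$ forces $x_i=0$, $x_j=0$, or that $x_i$ and $x_j$ have opposite signs; propagating this along edges (using that $G$ is bipartite) shows that, after possibly replacing $x$ by $-x$, one has $x_i\ge 0$ for $i\in S$ and $x_i\le 0$ for $i\in T$.

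To upgrade the weak inequalities to strict ones, so that $P=S$ and $N=T$, I would invoke the Lagrangian stationarity conditions for the maximizer $x$ exactly as in the proof of Lemma \ref{mu_cresc}: for $p>1$ one has
\[
d_i x_i - \sum_{ij\in E} x_j \;=\; c\,|x_i|^{p-1}\sg{x_i}
\]
for a common constant $c>0$, and at any $i$ with $x_i=0$ the right-hand side vanishes. For such $i\in S$, every neighbor $j$ lies in $T$ with $x_j\le 0$, so the left-hand side equals $-\sum_{ij\in E} x_j\ge 0$, and it can equal zero only if $x_j=0$ for every neighbor $j$ of $i$. Iterating, the zero set of $x$ is a union of entire connected components of $G$, and since $F(x)>0$ at least one component carries mass, on which the claimed sign pattern holds exactly.

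The symmetrization step is routine; the main obstacle is this last Lagrangian step, since $\|\cdot\|_p^p$ is only differentiable at zero for $p>1$ and one has to rule out a "zero island" sitting adjacent to the support — which is precisely what the neighbor-propagation argument using stationarity accomplishes.
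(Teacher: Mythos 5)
Your symmetrization step is exactly the paper's proof: the authors observe that flipping the signs of the negative entries on $S$ and the positive entries on $T$ replaces each term $(|x_i|-|x_j|)^2$ by $(|x_i|+|x_j|)^2$ and hence cannot decrease $F$, and they stop there. You correctly notice that this alone yields only a weak sign pattern (up to zero entries), and your attempt to upgrade it is a genuine addition; however, the upgrade as written has an ordering gap. The sentence ``propagating this along edges \dots\ shows that $x_i\ge 0$ for $i\in S$ and $x_i\le 0$ for $i\in T$'' is not justified, because propagation of the opposite-sign condition breaks at any vertex with $x_i=0$ (and across distinct connected components), while your subsequent stationarity argument assumes this very sign pattern when it asserts that every neighbor $j$ of a zero vertex of $S$ satisfies $x_j\le 0$. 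The circularity is repairable: apply the stationarity conditions to the symmetrized vector $y$ rather than to $x$. Since $\|y\|_p=1$ and $F(y)=F(x)=\mu^{(p)}(G)$, the vector $y$ is itself a maximizer and satisfies the first-order conditions for $p>1$; for $y$ the inequalities $y_i\ge 0$ on $S$ and $y_i\le 0$ on $T$ hold by construction, and your neighbor-propagation then shows that the common zero set of $x$ and $y$ is a union of connected components of $G$.

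Even after this repair, the conclusion $P=S$ and $N=T$ is simply false for disconnected bipartite graphs, so no argument can close the remaining gap without an extra hypothesis. For $G=K_2\cup K_2$ with $S=\{1,3\}$, $T=\{2,4\}$ and $p=2$, the vector $x=\tfrac{1}{\sqrt{2}}(1,-1,0,0)$ attains $\mu^{(2)}(G)=2$ but has $P=\{1\}\ne S$; likewise $\tfrac12(1,-1,-1,1)$ is a zero-free maximizer whose orientation flips between the two components. The lemma implicitly requires $G$ connected (it is only ever applied to complete bipartite graphs), and the paper's own one-line proof glosses over both the zero entries and the per-component orientation. So your instinct about the weak point was right, your stationarity step does handle the zeros once reordered as above, and the residual defect lies in the statement of the lemma rather than in your argument.
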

\begin{proof}
Let $x$ be as stated above. Note that we can freely invert the entry signs preserving feasibility. Without loss of generality, if we invert the signs of negative entries in $S$ and positive entries in $T$, we are replacing, in the sum of $F$, terms of the form $(|x_i|-|x_j|)^2$ by $(|x_i|+|x_j|)^2$, thus increasing $F$.
\end{proof}

\begin{lemma}\label{mu2_bip_same_value}
Let $G=(S,T,E)$ be a bipartite graph, and $x\in\R^n$ such that $\|x\|_p=1$ and $F(x)=\mu^{(p)}(G)$. Then for $p\ge 2$, if $i$ and $j$ are in the same class, then $x_i=x_j$.
\end{lemma}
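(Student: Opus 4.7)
The plan is to reduce to the nonnegative orthant via Lemma~\ref{mu1_same_sign} and then argue by a pairwise symmetrization based on the power-mean inequality for $p\ge 2$. By Lemma~\ref{mu1_same_sign}, after possibly replacing $x$ by $-x$, we may assume $x_i\ge 0$ on $S$ and $x_j\le 0$ on $T$. Setting $a_i:=x_i$ on $S$ and $b_j:=-x_j$ on $T$ gives nonnegative entries and
\[
F(x)=\sum_{ij\in E}(a_i+b_j)^2=\sum_{i\in S}d_i a_i^2+\sum_{j\in T}d_j b_j^2+2\sum_{ij\in E}a_i b_j
\]
under the constraint $\sum_i a_i^p+\sum_j b_j^p=1$.

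Next, I would argue by contradiction: if $a_i\ne a_{i'}$ for some $i,i'\in S$, replace the pair $(a_i,a_{i'})$ by $(c,c)$ with $2c^p=a_i^p+a_{i'}^p$, which preserves the $p$-norm. For $p\ge 2$ the power-mean inequality yields
\[
c=\Bigl(\tfrac{a_i^p+a_{i'}^p}{2}\Bigr)^{1/p}\;\ge\;\Bigl(\tfrac{a_i^2+a_{i'}^2}{2}\Bigr)^{1/2}\;\ge\;\tfrac{a_i+a_{i'}}{2},
\]
strict since $a_i\ne a_{i'}$; hence $2c^2>a_i^2+a_{i'}^2$ and $2c>a_i+a_{i'}$. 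Writing $x'$ for the perturbed vector, the change $F(x')-F(x)$ decomposes into a degree-weighted quadratic piece $d_i(c^2-a_i^2)+d_{i'}(c^2-a_{i'}^2)$ and a linear cross-term piece $2(c-a_i)\sum_{j\sim i}b_j+2(c-a_{i'})\sum_{j\sim i'}b_j$. Both contributions are nonnegative (using $b_j\ge 0$), and strictly positive in the pairs of interest, contradicting the maximality $F(x)=\mu^{(p)}(G)$. Thus $a_i=a_{i'}$, and the same swap on $T$ yields $b_j=b_{j'}$ for all $j,j'\in T$.

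The main obstacle is to ensure that each swap genuinely increases $F$ even when the two vertices differ in degree or in neighbor-weight sum: the quadratic piece is monotone when $d_i=d_{i'}$ and the cross piece is monotone when $\sum_{j\sim i}b_j=\sum_{j\sim i'}b_j$. To control both simultaneously I would use the Lagrange stationarity identity
\[
d_i a_i+\sum_{j\sim i}b_j=\mu^{(p)}(G)\,a_i^{p-1},
\]
obtained by differentiating $F(x)-\lambda(\|x\|_p^p-1)$, which couples the degree and the neighbor sum to $a_i^{p-1}$. Either this coupling already forces the $a_i$ to a common value, or one iterates the pairwise swap starting from a carefully chosen pair until $a$ is constant on $S$ and $b$ is constant on $T$. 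Making this bookkeeping airtight — especially verifying that the swap improvement cannot be cancelled by side effects at third vertices — is the substantive content of the lemma.
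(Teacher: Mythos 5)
Your reduction to the nonnegative orthant and your appeal to the power-mean inequality are in the spirit of the paper's argument, but the pairwise swap at the heart of your proof does not close. Since $c=\bigl((a_i^p+a_{i'}^p)/2\bigr)^{1/p}$ lies \emph{between} $a_i$ and $a_{i'}$, exactly one of $c^2-a_i^2$ and $c^2-a_{i'}^2$ is negative, so the quadratic piece $d_i(c^2-a_i^2)+d_{i'}(c^2-a_{i'}^2)$ is not a sum of nonnegative terms; for $p=2$ one has $2c^2=a_i^2+a_{i'}^2$ exactly (your claimed strict inequality fails there), so this piece equals $(d_i-d_{i'})(c^2-a_i^2)$ and can be strictly negative when $d_i\ne d_{i'}$. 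The cross piece has the same defect when $\sum_{j\sim i}b_j\ne\sum_{j\sim i'}b_j$. You flag this obstacle yourself, but the proposed rescue via the stationarity identity is a plan rather than an argument, so the main step of the lemma is not established. Worse, no bookkeeping can repair it at the stated generality: for a bipartite graph that is not complete bipartite the conclusion is actually false (take $S=\{1,2\}$, $T=\{3,4\}$ with edges $13,14,23$; for $p=2$ the eigenvalue equations force $x_1\ne x_2$ at the optimum). The lemma is only ever applied to complete bipartite graphs, and there both obstacles vanish: $d_i=d_{i'}=|T|$ and every vertex of $S$ has the same neighbor sum $\sum_{j\in T}b_j$, so with that restriction your swap does go through.

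The paper's own proof avoids the pairwise difficulty by a global replacement: it substitutes the $p$-th power mean $M_p$ of $\{x_i : i\in S\}$ for \emph{every} entry on $S$ at once (which preserves $\|x\|_p$) and then compares, for each $j\in T$, the quantities $\sum_i x_i^2$ and $\sum_i x_i$ with $|S|M_2^2$ and $|S|M_1$, using $M_p\ge M_2\ge M_1$ with strictness somewhere when the $x_i$ are not all equal. That comparison is valid precisely because each $j\in T$ is adjacent to all of $S$, i.e.\ again in the complete bipartite case. The honest fix for your write-up is to restrict to the complete bipartite situation and then run either your pairwise swap (now legitimate) or the global power-mean replacement; as written, your proof has a genuine gap at its central inequality.
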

\begin{proof}
Suppose $x$ as stated above has entries with $i,j\in S$($=P$ without loss of generality, by Lemma \ref{mu1_same_sign}) with $x_i\ne x_j$. So
\[ F(x)=\sum_{j\in T}\sum_{ij\in E}(x_i-x_j)^2. \]
Let $M_p$ denote the power mean of $\{x_i : i\in S\}$. We exchange each $x_i$ by $M_p$. One can check that feasibility is preserved.
For fixed $j$, it is sufficient to check the variation of $\sum_i x_i^2+2\sum_i x_ix_j$:
\[  |S|M^2_p+2|S|M_p x_j>|S|M^2_2+2|S|M_1 x_j=\sum_i x_i^2+2\sum_i x_ix_j. \]
The inequality holds by the power mean inequality. So the exchange increases $F$, contradicting the maximality of $x$.
\end{proof}

This allows us to obtain a formula for complete bipartite graphs.

\begin{lemma}\label{p:mu_bip_comp}
Let $G=(S,T,E)$ be a complete bipartite graph. For $p\ge 2$,
\[  \mu^{(p)}(G)=|S||T|(a+b)^2, \]
where
\begin{eqnarray*}
  a=\left(|S|+|T|\left(\frac{|S|}{|T|}\right)^{\frac{p}{p-1}}\right)^{-1/p}, & b=\left(\dfrac{|S|}{|T|}\right)^{\frac{1}{p-1}}a.
\end{eqnarray*}
\end{lemma}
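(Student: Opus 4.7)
The plan is to use the two preceding lemmas to reduce the continuous optimization over $x \in \mathbb{R}^n$ with $\|x\|_p = 1$ to a two-variable constrained optimization, and then solve that two-variable problem by Lagrange multipliers.

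First, by Lemma \ref{mu1_same_sign} applied to the complete bipartite graph $G=(S,T,E)$, I may assume that any maximizer $x$ satisfies $x_i \geq 0$ for $i \in S$ and $x_j \leq 0$ for $j \in T$ (after replacing $x$ by $-x$ if necessary). Next, by Lemma \ref{mu2_bip_same_value}, there exist $a, b \geq 0$ such that $x_i = a$ for every $i \in S$ and $x_j = -b$ for every $j \in T$. The constraint $\|x\|_p = 1$ becomes $|S|a^p + |T|b^p = 1$, and since every edge in the complete bipartite graph contributes $(a-(-b))^2 = (a+b)^2$, the objective simplifies to
\[ F_G(x) = |S||T|(a+b)^2. \]
Hence
\[ \mu^{(p)}(G) = \max\bigl\{ |S||T|(a+b)^2 \,:\, a,b \geq 0,\ |S|a^p + |T|b^p = 1 \bigr\}. \]

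Next I would solve this two-variable problem. The feasible set is compact and the objective is continuous, so a maximum exists; at the boundary $a=0$ or $b=0$ one of the two ``factors'' of $(a+b)^2$ is killed while the $p$-norm constraint still needs to be satisfied by the other variable alone, and a direct check shows the boundary values are dominated by any interior point (this is where $p \geq 2$ and both classes being nonempty is used). So the maximum is attained at an interior critical point, where Lagrange multipliers give
\[ 2(a+b) = \lambda p |S| a^{p-1}, \qquad 2(a+b) = \lambda p |T| b^{p-1}. \]
Dividing these equations yields $|S|a^{p-1} = |T|b^{p-1}$, that is,
\[ \frac{b}{a} = \left(\frac{|S|}{|T|}\right)^{\frac{1}{p-1}}. \]
Substituting $b = (|S|/|T|)^{1/(p-1)} a$ into the constraint $|S|a^p + |T|b^p = 1$ gives
\[ a^p\left(|S| + |T|\left(\frac{|S|}{|T|}\right)^{\frac{p}{p-1}}\right) = 1, \]
from which the claimed closed forms for $a$ and $b$ follow. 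Plugging these back into $|S||T|(a+b)^2$ yields the stated formula.

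The only real subtlety is justifying that the interior critical point is the global maximum rather than, say, a saddle or boundary optimum. I expect this to be a routine compactness plus boundary-check argument: the boundary $a=0$ (resp.\ $b=0$) forces $b = |T|^{-1/p}$ (resp.\ $a = |S|^{-1/p}$) and gives objective value $|S||T|^{1-2/p}$ (resp.\ $|T||S|^{1-2/p}$), and one can compare this with the interior critical value, or simply invoke the fact that the unique interior critical point of a continuous function on a compact set attaining strictly larger values somewhere in the interior must be the global maximum. No further estimates are needed.
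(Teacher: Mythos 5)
Your proposal follows exactly the paper's route: reduce to the two-variable problem $\max |S||T|(a+b)^2$ subject to $|S|a^p+|T|b^p=1$ via Lemmas \ref{mu1_same_sign} and \ref{mu2_bip_same_value}, then apply Lagrange multipliers; the paper's own proof is just these two sentences, so you have in fact supplied more detail (the explicit critical-point computation and the boundary comparison) than the original. One small wording caveat: the boundary values are not dominated by \emph{every} interior point, only by the critical one (or by \emph{some} interior point), but you state the correct version of the argument immediately afterwards, so the proof stands.
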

\begin{proof}
By Lemma \ref{mu2_bip_same_value}, we can assume $x_i=a$ for $i\in S$ and $x_i=-b$ for $i\in T$. Then apply Lagrange method to the function $g(a,b)=|S||T|(a+b)^2$ constrained by $h(a,b)=|S|a^p+|T|b^p=1$.
\end{proof}

In the proof of the item (c) of Theorem \ref{t:main}, the balanced complete bipartite graph attains the maximum for $\mu^{(\infty)}$ among graphs of order $n$. The same holds for $\mu^{(p)}$ if $2\le p<\infty$ if $n$ is even.

\begin{proof}[Proof of Theorem \ref{t:mu_subg_bip}]
As $\mu^{(2)}(K_n)=n$, the bound $\mu^{(p)}(G)\le n^{2-2/p}$ is a direct consequence of Lemma \ref{mu2p_lbound}.
By Lemma \ref{p:mu_bip_comp}, one can check that $\mu^{(p)}(K_{n/2,n/2})=n^{2-2/p}$. Furthermore, if $K_{n/2,n/2}\subseteq G$, the inequality is trivial, because $F_G(x)$ won't decrease if we add edges to $G$.

Now let $G$ and $x\in\R^n$ such that $F_G(x)=\mu^{(p)}(G)=n^{2-2/p}$. Note that $|x_i|=|x_j|, \forall i,j\in V$; otherwise, as $\mu^{(2)}(G)\le\mu^{(2)}(K_n)=n$ and by Lemma \ref{mu2p_lbound}, we would have $F_G(x)< n^{2-2/p}$. Also, $K_{|P|,|N|}=(P,N,E')$ is a subgraph of $G$; otherwise there would be $a\in P$ and $b\in N$ such that $\{a,b\}\notin E(G)$ and $F_{G\cup\{a,b\}}(x)>F_G(x)=n^{2-2/p}$, in contradiction with Lemma \ref{mu2p_lbound}.

Therefore, $F_{K_{|P|,|N|}}(x)=F_G(x)=n^{2-2/p}$, because the edges induced by $P$ or $N$ do not contribute to $F_G(x)$. 
Observe that, by Lemma \ref{p:mu_bip_comp}, $|x_i|=|x_j|$ if and only if $|P|=|N|$, therefore $|P|=|N|=n/2$.
\end{proof}

Although we conjecture that the equality condition of Theorem \ref{t:mu_subg_bip} also holds for odd $n$ (of course with a different quota given by \ref{p:mu_bip_comp}), the reasoning used in the proof does not work in this case, because then the balanced complete bipartite graph does not attain the bound given by Lemma \ref{mu2p_lbound}.

\section{Concluding remarks}\label{s:conc}


As already mentioned in the introduction, we seem to obtain maximum cuts under different restrictions in the graph by varying $p$. That motivates the following broad question for further investigation:

\begin{ques}
For $p\ge1$, which relation possibly exists between $\mu^{(p)}(G)$ and cuts (or other parameters) of $G$?
\end{ques}

Also, we proved that computing $\mu^{(1)}(G)$ can be done in linear time, while computing $\mu^{(\infty)}(G)$ is an NP-complete problem. As finding the maximum degree of $G$ can be trivially reduced in linear time to finding the size of a maximum cut of $G$, it might be the case that, by increasing $p$, we obtain a problem that is at least as hard. This motivates the following conjecture:

\begin{conj}
Let $q>p\ge1$. The problem of finding $\mu^{(p)}(G)$ can be reduced to the problem of finding $\mu^{(q)}(G)$ in polynomial time.
\end{conj}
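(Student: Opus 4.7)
The proposed approach is to find a polynomial-time graph transformation that shifts the ``effective norm'' of the optimization problem. Concretely, given $G$ with $n$ vertices and exponents $1\le p<q$, the plan is to construct a graph $G'=G'(G,p,q)$ of size polynomial in $n$, together with a polynomial-time postprocessing map $\phi$, such that
\[
\mu^{(p)}(G)=\phi\!\left(\mu^{(q)}(G')\right).
\]
The natural first candidate is a weighted blow-up: replace each vertex $v\in V(G)$ by an independent set $C_v$ of $c_v$ twin copies, and join $C_u$ to $C_v$ completely whenever $uv\in E(G)$. If an optimal vector $y$ for $\mu^{(q)}(G')$ were forced to be constant on each cluster (say $y\equiv \alpha_v$ on $C_v$), then the $q$-norm constraint would read $\sum_v c_v|\alpha_v|^q=1$, while $F_{G'}(y)=\sum_{uv\in E(G)}c_uc_v(\alpha_u-\alpha_v)^2$. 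Choosing the multiplicities $c_v$ appropriately as a function of $p$ and $q$ and then substituting $\alpha_v=c_v^{-1/q}x_v$ should convert the $q$-constraint into $\|x\|_p=1$ (up to rational rescaling) and the objective into a scalar multiple of $F_G(x)$, yielding the reduction.

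The principal technical hurdle is showing that extremal vectors of $G'$ are indeed cluster-constant. A plausible route is to extend Lemma \ref{mu2_bip_same_value}, which already establishes an ``equal values on symmetric orbits'' property for complete bipartite graphs and $p\ge 2$, to blow-ups. The Lagrange stationarity equations of the form \eqref{lagrange_j} should be central: two copies $w,w'\in C_v$ have identical neighborhoods in $G'$, so if $y_w\neq y_{w'}$ then the two corresponding stationarity equations share the same left-hand side but differ on the right, forcing $|y_w|^{q-1}\mathrm{sign}(y_w)=|y_{w'}|^{q-1}\mathrm{sign}(y_{w'})$ and hence $y_w=y_{w'}$ whenever $q>1$. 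If this local argument goes through, a symmetrization over the automorphism group permuting each $C_v$ finishes the reduction; one needs to verify strict improvement unless $y$ is already cluster-constant.

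A backup strategy is suggested by the continuity and strict monotonicity of $p\mapsto \mu^{(p)}(G)$ from Theorem \ref{t:main}(c): one could attempt a Turing reduction that calls the $\mu^{(q)}$-oracle on polynomially many auxiliary graphs $G\cup H_k$, where the gadgets $H_k$ are designed to isolate the contribution of $G$ to the optimum. However, to recover $\mu^{(p)}(G)$ \emph{exactly} rather than up to arbitrary precision, one would need an algebraic closed form relating the oracle answers, which again seems to require solving the Lagrange system in closed form.

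The main obstacle is decoupling: the $p$-norm constraint inherently links all coordinates of the optimizer, so local graph modifications do not localize to subproblems of known type, and extremal vectors after a blow-up need not respect the partition into clusters without further justification. I expect a successful proof to hinge precisely on a structural lemma extending Lemma \ref{mu2_bip_same_value} to arbitrary blow-ups, and I regard this cluster-constancy step as the critical technical bottleneck; once it is available, the algebraic matching of the two constraints through the choice of $c_v$ should be routine.
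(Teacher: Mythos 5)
This statement is left as an open conjecture in the paper --- the authors offer only the heuristic motivation that finding the maximum degree ($\mu^{(1)}$) reduces trivially to finding a maximum cut ($\mu^{(\infty)}/4$) --- so there is no proof of the authors' to compare against, and your proposal must stand on its own. It does not: the central construction has an algebraic flaw that no amount of work on the cluster-constancy lemma can repair. If the optimal $y$ on the blow-up $G'$ is constant on clusters, say $y\equiv\alpha_v$ on $C_v$, the constraint reads $\sum_v c_v|\alpha_v|^q=1$, and your substitution $\alpha_v=c_v^{-1/q}x_v$ turns this into $\sum_v|x_v|^q=1$ --- still the $q$-norm. The multiplicities $c_v$ only rescale individual coordinates; no choice of them, and indeed no linear change of variables at all, can change the exponent of the constraint from $q$ to $p$. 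Since $F$ is a fixed quadratic form and the constraint remains degree-$q$ homogeneous under any such substitution, the blow-up computes (a rescaling of) a $q$-norm problem, never a $p$-norm one. The objective also fails to match: with nonuniform $c_v$ one gets $\sum_{uv}c_uc_v(c_u^{-1/q}x_u-c_v^{-1/q}x_v)^2$, which is not a scalar multiple of $F_G(x)$. This is exactly the ``decoupling'' obstacle you name at the end, but it bites already at the level of the algebra, before any structural lemma is needed.

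The cluster-constancy step is also not as clean as you suggest. Two non-adjacent twins $w,w'\in C_v$ have the same neighbor sum $S=\sum_{wk\in E}y_k$, but the stationarity equations in the form of \eqref{lagrange_j} read $d\,y_w-S=\lambda q\,|y_w|^{q-1}\sg{y_w}$ and $d\,y_{w'}-S=\lambda q\,|y_{w'}|^{q-1}\sg{y_{w'}}$; their left-hand sides are \emph{not} equal, and subtracting gives $d(y_w-y_{w'})=\lambda q\left(|y_w|^{q-1}\sg{y_w}-|y_{w'}|^{q-1}\sg{y_{w'}}\right)$, which does not force $y_w=y_{w'}$ (already for $q=2$ it is vacuous when $2\lambda=d$). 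Moreover, expanding $F_{G'}(y)$ shows it rewards large within-cluster second moments for a fixed first moment, while the $q$-norm budget rewards concentration, so there is no a priori reason the optimum should be cluster-constant. The backup Turing-reduction sketch is too vague to assess and, as you note yourself, would at best recover $\mu^{(p)}(G)$ approximately. In short, the conjecture remains open and your proposal, as it stands, cannot yield a proof.
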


There are other approaches that seek to generalize eigenvalues via the introduction of the $p$-norm. Amghibech \cite{Amghibech03} introduced a non-linear operator, which he called the $p$-Laplacian $\Delta_p$, that induces a functional of the form $\langle x,\Delta_p\rangle=\sum_{ij\in E} |x_i-x_j|^p$ instead of the quadratic form of the Laplacian. This functional is unbounded for $p=\infty$ over the $p$-norm unit ball, and the case $p=1$ cannot be treated directly. However, the eigenvalue formulation used allows to explore eigenvalues other than the largest and the smallest: $\lambda$ is said to be a $p$-eigenvalue of $M$ if there is a vector $v\in\R^n$ such that
$$(\Delta_p x)_i=\lambda\phi_p(v_i),\quad \phi_p(x)=|x|^{p-1}\sg{x}.$$
The vector $v$ is called a $p$-eigenvector of $M$ associated to $\lambda$. Using this formulation, B\"{u}hler and Hein \cite{BuhHei2009} proved that the cut obtained by ``thresholding'' (partitioning according to entries greater than a certain constant) an eigenvector associated to the second smallest eigenvalue of $\Delta_p$ converges to the optimal Cheeger cut  when $p\to 1$; in practice, the case $p=2$ is used to obtain an approximation to this cut \cite{ShiMal,vonLux}.

It may be possible to adapt this method to the standard Laplacian operator, which would allow us to explore a $p$-norm version of the second smallest eigenvalue of $L$, which could potentially also lead to different cuts according to the value of $p$.

\smallskip

\noindent \textbf{Acknowledgments} This work was partially supported by CAPES Grant PROBRAL 408/13 - Brazil and DAAD PROBRAL Grant 56267227 - Germany.

\end{document}